\newcommand{\Z}{\ensuremath{\mathbb{Z}}}
\newcommand{\RP}{\ensuremath{\mathbb{R}\mathrm{P}}}
\newcommand{\Pe}{\ensuremath{\mathcal{P}}}
\newcommand{\Qe}{\ensuremath{\mathcal{Q}}}
\newcommand{\q}{\ensuremath{\mathfrak{q}}}
\newcommand{\p}{\ensuremath{\mathfrak{p}}}
\newcommand{\A}{\ensuremath{\mathfrak{a}}}
\newtheorem{definition}{Definition}[section]
\newtheorem{proposition}[definition]{Proposition}
\newtheorem{lemma}[definition]{Lemma}
\newtheorem{remark}[definition]{Remark}
\newtheorem{theorem}[definition]{Theorem}
\begin{document}

\begin{center}

{\Large\sc Strongly surjective maps from certain two-complexes with trivial top-cohomology onto the projective plane} 

\vspace{8mm}
{\normalsize \sc Marcio C. Fenille \quad \& \quad Daciberg L. Gon\c calves}

\end{center}

\vspace{5mm}

\noindent{\bf Abstract:} For the model two-complex $K$ of the group presentation $\Pe=\langle x,y\,|\,x^{k+1}yxy \rangle$, with $k\geq1$ odd, we describe representatives for all free and based homotopy classes of maps from $K$ into the projective plane. As a result   we classify  the homotopy classes containing only surjective maps. With this approach we get an answer, for maps into the real projective plane, to a classical question in topological root theory, which is known so far, in dimension two, only for maps into the sphere, the torus and the Klein bottle. The answer follows by proving that for all $k\geq1$ odd, $H^2(K;\Z)=0$ and, for $k\geq3$ odd, there exist maps from $K$ into the real projective plane which are strongly surjective. For $k=1$, there does not exist  such a strongly surjective map. 

\vspace{2mm}

\noindent {\bf Key words:} Strong surjections, two-dimensional complexes, projective plane, topological root theory, cohomology with local coefficients, homotopy classes.

\vspace{2mm}

\noindent {\bf Mathematics Subject Classification:} Primary 55M20 $\cdot$ Secondary 55N25, 57M20. 


\section{Introduction and Main Theorem}

\hspace{4mm} The Hopf-Whitney Classification Theorem \cite[Corollary 6.19, p.\,244]{Whitehead} implies that, for a finite and connected $n$-dimensional complex $X$ (an $n$-complex, for short), the set $[X;S^n]$ of the free homotopy classes of maps from $X$ into the $n$-sphere $S^n$ is in a one-to-one correspondence with the integer cohomology group $H^n(X;\Z)$. Thus, there exists a map from $X$ onto $S^n$ whose free homotopy class contains only surjective maps if and only if $H^n(X;\Z)\neq0$. Such a map is called a {\it strong surjection} or a {\it strongly surjective map}.

The composition of a strong surjection from $X$ onto $S^n$ with the double covering map $\p:S^n\to\RP^n$ provides a strong surjection from $X$ onto the $n$-dimensional projective space $\RP^n$. Hence, the assumption $H^n(X;\Z)\neq0$ implies the existence of a strong surjection from $X$ onto $\RP^n$. In this article, we prove, for $n=2$, that the converse implication does not hold true. Namely, we show the existence of strongly surjective maps from $K$ onto $\RP^2$ for a certain two-complex $K$ for which $H^2(K;\Z)=0$.

This work concerns an important and interesting problem in topological root theory, namely, to know for what closed $n$-manifold $Y$, the nullity of the top integer cohomology group of a $n$-complex $X$ forces the non-existence of strong surjections from $X$ into $Y$. 

Besides the relationship with the Hopf-Whitney Classification Theorem, the relevance of the problem lies in the fact that, by the Universal Coefficient Theorem for Cohomology \cite[Theorem 3.2, p.\,195]{Hatcher}, an $n$-complex $X$ with $H^n(X;\Z)=0$ is (co)homologically like a $(n-1)$-complex, since such a nullity is equivalent to $H_n(X;\Z)=0$ and $H_{n-1}(X;\Z)$ torsion free; that is, the invariant $H^n(\,\cdot\,;\Z)$ is not able to detect the existence of $n$-cells even when the inclusion $X^{n-1}\hookrightarrow X$ of the $(n-1)$-skeleton of $X$ into $X$ is not a homotopy equivalence.

As a consequence of the classification theorem for surfaces, in dimension two it is more feasible to completely solve the problem. However, the first contributions \cite{Aniz-2006, Aniz-2008} were presented in dimension three. In the 2000's, Claudemir Aniz answers the problem (proposed by Daciberg Lima Gon\c calves) for the following 3-manifolds: the cartesian product $S^1\times S^2$, the non-orientable $S^1$-bundle over $S^2$ and the orbit space of $S^3$ with respect to the action of the Quaternion group. He showed that only in the second case there exists a 3-complex $X$ with $H^3(X;\Z)=0$ and a strong surjection from $X$ onto the corresponding $3$-manifold.

The first conclusive answer in dimension two, other than that provide by the Hopf-Whitney Classification Theorem, was present in 2016 in \cite{Fenille-Top-Cohomology}, in which the first author built a countable collection of two-complexes with trivial second integer cohomology group and, from each of them, there exists a strong surjection onto the torus $S^1\times S^1$. By composing each  such strong surjection with the double covering map from the torus onto the Klein bottle, we get a strong surjection onto the Klein bottle.

There is no other known conclusive answer to the problem. Therefore, in dimension two, there are answers only for maps into the sphere, the torus and the Klein bottle.

However, there exist partial answers for maps into the projective plane $\RP^2$. We refer to the results presented in \cite{Fenille-One-Relator-RP2,Fenille-RP2-Square}. In fact, in \cite{Fenille-One-Relator-RP2} the germ of the conclusive answer is present: it is shown that a cohomological condition implies the non existence of a strongly surjective map. Here, we give a more complete answer as a consequence of the following theorem.

\begin{theorem}[Main Theorem]\label{Main-Theorem}
Let $K$ be the model two-complex of the group presentation $\Pe=\langle x,y\,|\,x^{k+1}yxy \rangle$, with $k\geq1$ odd. Then $H^2(K;\Z)=0$ and we have:
    \begin{itemize}
    \item[{\rm (1)}] If $k=1$, then $[K;\RP^2]\equiv [K;\RP^2]^{\ast}\equiv\{1\}\sqcup\{\bar{0}\}$ and both the homotopy classes contain non-surjective maps.
    \item[{\rm (2)}] If $k=2p-1\geq3$, then $[K;\RP^2]^{\ast}\equiv\{1\}\sqcup\Z_k$ and $[K;\RP^2]\equiv\{1\}\sqcup\Z_{p}$. The free homotopy classes corresponding to $1$ and $\bar{0}$ contain non-surjective maps and the remaining $p-1$ classes contain only surjective maps.
    \end{itemize}
\end{theorem}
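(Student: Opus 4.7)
The plan is to split the theorem into three logical pieces: (a) show $H^2(K;\Z) = 0$ by direct cellular computation; (b) classify $[K;\RP^2]^{\ast}$ using obstruction theory with twisted coefficients; (c) descend to free classes and identify which orbits contain only surjective maps.

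For (a), the cellular chain complex of $K$ has one $0$-cell, two $1$-cells ($x,y$), and one $2$-cell, with $\partial_1 = 0$ and $\partial_2$ given by the abelianized relator, i.e.\ the integer matrix $(k+2,2)$. Since $k+2$ is odd one has $\gcd(k+2,2)=1$, hence $H^2(K;\Z)=0$.

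For (b), the key input is that, for a $2$-complex $X$ and target $Y$, the set of based homotopy classes of maps $X \to Y$ inducing a fixed $\phi \in \mathrm{Hom}(\pi_1(X), \pi_1(Y))$ that kills every relator is (when non-empty) in bijection with $H^2(X; \pi_2(Y)_\phi)$, where $\pi_2(Y)$ carries the local system twisted by $\phi$ composed with the $\pi_1(Y)$-action. Applied to $Y=\RP^2$: setting $\phi(x)=a, \phi(y)=b \in \Z_2$, the relator imposes $(k+2)a=0$; since $k+2$ is odd this forces $a=0$, so there are exactly two such homomorphisms, trivial $\bar 0$ and the nontrivial $\phi$ with $\phi(x)=0, \phi(y)=1$. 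Trivial $\phi$ gives the constant local system and $H^2(K;\Z)=0$, producing a single class (the singleton $\{1\}$). For the nontrivial $\phi$, I would compute the twisted cochain complex of $K$ via Fox calculus: $\partial r/\partial x = 1 + x + \cdots + x^k + x^{k+1}y$ and $\partial r/\partial y = x^{k+1}(1+yx)$, evaluated under the sign character $\epsilon(g)=(-1)^{\phi(g)}$, give $k$ and $0$ respectively, so $d^1(a,b) = ka$ and $H^2(K;\Z_\phi) = \Z/k\Z = \Z_k$. Combining the two cases, $[K;\RP^2]^{\ast} \equiv \{1\} \sqcup \Z_k$.

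For (c), free classes are orbits of the $\pi_1(\RP^2) = \Z_2$-action, which fixes $\{1\}$ and acts on $\Z_k$ by negation (the nontrivial element acts by $-1$ on $\pi_2(\RP^2)$, and this descends to $H^2(K;\Z_\phi)$). For $k=2p-1$ the orbits on $\Z_k$ are $\{\bar 0\}$ and the $p-1$ pairs $\{\bar j, -\bar j\}$, giving $p$ orbits in total, so $[K;\RP^2] \equiv \{1\} \sqcup \Z_p$. The class $\{1\}$ contains the constant map, and for the $\bar 0$ orbit in $\Z_k$ I would exhibit a non-surjective representative: since $H_1(K;\Z) \cong \Z$ (by Smith normal form of $(k+2,2)^{\top}$ when $k$ is odd), there is a map $K \to S^1$ generating $H^1(K;\Z)$, and composing with $S^1 \simeq \RP^1 \hookrightarrow \RP^2$ yields a map with image in $\RP^1$ (thus missing the interior of the top $2$-cell of $\RP^2$); its obstruction class vanishes by naturality since it factors through the $1$-dimensional $\RP^1$, on which $H^2$ with any coefficient system vanishes. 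Conversely, if $g: K \to \RP^2$ inducing the nontrivial $\phi$ misses some $q \in \RP^2$, then $g$ factors through $\RP^2 \setminus\{q\} \simeq S^1$, and the same naturality argument forces the obstruction class of $g$ to be $\bar 0 \in \Z_k$. Thus the remaining $p-1$ orbits in $\Z_p$ consist entirely of surjective maps, while for $k=1$ we have $p=1$, $\Z_p=\{\bar 0\}$, and both surviving classes are non-surjective.

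The main obstacle I expect is making the obstruction-theoretic bijection $[K;\RP^2]^{\ast}_\phi \equiv H^2(K;\Z_\phi)$ precise and natural for the non-simply-connected, non-aspherical target $\RP^2$; one likely works either equivariantly on the double cover $S^2 \to \RP^2$ (reducing the nontrivial-$\phi$ case to a Hopf--Whitney-type count on an auxiliary double cover of $K$) or through the Postnikov stage $\RP^2 \to K(\Z_2,1)$, and one must track the labeling carefully so that ``$\bar 0$'' simultaneously names the pullback-trivial class and the explicit non-surjective representative above.
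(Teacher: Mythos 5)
Your proposal is correct in outline and reaches the paper's conclusions by essentially the same architecture: compute $H^2(K;\Z)=0$ and $H^2(K;_{\beta}\!\Z)\cong\Z_k$ via Fox calculus, classify based classes cohomologically, pass to free classes as orbits of the $\pi_1(\RP^2)$-action, and characterize the unique non--strongly-surjective class in the $\beta$-component by factorization through $S^1\subset\RP^2$. Your computation of $\hom(\pi_1(K);\Z_2)$ and of the twisted coboundary ($k$ and $0$ from the two Fox derivatives) coincides with the paper's, and your non-surjective representative ($\bar{x}\mapsto 2$, $\bar{y}\mapsto-(k+2)$ into $S^1$, then the skeleton inclusion) is literally the paper's map $g$. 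The genuine difference is the middle step: the paper does not argue abstractly with a torsor over $H^2(K;_{\beta}\!\Z)$. It proves $K$ is aspherical, cites Sieradski for the bijection $[K;\RP^2]^{\ast}_{\alpha}\equiv H^2(K;_{\alpha}\!\Z)$, and then constructs explicit representatives $f_n=h_n\circ\omega$ of all $k$ classes, where $\omega$ collapses $e^1_x$ and induces the epimorphism $\Z\to\Z_k$ on twisted $H^2$ and $h_n$ has twisted degree $n$. That construction buys three things at once: the $k$ based classes are distinguished by their induced homomorphisms on twisted $H^2$ (so ``class $\bar{0}$'' and ``trivial induced map'' become interchangeable with no further argument), strong surjectivity of $f_n$ for $n\neq0$ is immediate, and the $\pi_1(\RP^2)$-action is computed from an explicit geometric homotopy $h_{-n}\simeq h_n$ along the generator $\sigma$.

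Two points in your version need shoring up. First, ``the action is negation on $\Z_k$'' is not automatic: $[K;\RP^2]^{\ast}_{\beta}$ is only a torsor over $H^2(K;_{\beta}\!\Z)$, so the action of the generator is a priori affine, $c\mapsto -c+t$, and you must pin down $t=0$ --- for instance by noting that your $S^1$-factoring representative of $\bar{0}$ is fixed by the action (drag it inside $\RP^1$, where the $\pi_1$-action on $[K;S^1]^{\ast}$ is trivial). Since $k$ is odd, any affine involution $c\mapsto -c+t$ on $\Z_k$ has exactly one fixed point and $(k-1)/2$ two-element orbits, so your orbit count $p$ is safe regardless, but the labelling of the fixed orbit as $\bar{0}$ is what carries the surjectivity statement. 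Second, your surjectivity argument silently identifies the obstruction-theoretic difference class with the difference of induced homomorphisms on twisted $H^2$; this compatibility is true but is precisely what the paper's explicit $\omega$ and $h_n$ are built to avoid having to prove. Neither issue is fatal, but both must be made explicit for the proof to close.
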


The notation used in Theorem \ref{Main-Theorem} is detailed in the text. We anticipate that the symbol $\equiv$ indicates bijection between sets (without preserving any algebraic structure). 

We describe the structure of this article, highlighting the steps of the proof of the Main Theorem. In Section \ref{Section-Action} we introduce notations and recall some results regarding  the action of the fundamental group over based homotopy classes. In Section \ref{Section-Self-Maps} we describe in detail the free and the based homotopy classes of self-maps of the projective plane and we prove that the action of $\pi_1(\RP^2)$ on the set $[\RP^2;\RP^2]^{\ast}_{id}$ exchanges based homotopy classes of maps of opposite {\it twisted degree}. In Section \ref{Section-Complex-K} we finally consider the model two-complex $K$ of the group presentation $\Pe=\langle x,y\,|\,x^{k+1}yxy \rangle$, with $k\geq1$ odd, and we prove that, for the unique twisted integer coefficient system $\beta$ over $K$, other than the trivial one, the corresponding twisted cohomology group $H^2(K;_{\beta}\!\Z)$ is cyclic of order $k$. In Section \ref{Section-Key-Map} we build a special map $\omega: K\to\RP^2$ for which the induced homomorphism on twisted cohomology groups, namely $\omega^{\ast}:H^2(\RP^2;_{\varrho}\!\Z)\to H^2(K;_{\beta}\!\Z)$, corresponds to the natural epimorphism $\Z\to\Z/k\Z$, and so $\omega$ is strongly surjective, for $k\neq 1$. Section \ref{Section-Proof}    consists of the proof of Theorem 1.1. The proof follows from a complete description of representatives for all the free and based homotopy classes of maps from $K$ into $\RP^2$. The main step is the proof that  each based map $f:K\to\RP^2$ inducing the homomorphism $\beta$ on fundamental groups is based homotopic to a map $f_n=h_n\circ\omega$, in which $\omega$ is the special map built in Section \ref{Section-Key-Map} and $h_n:\RP^2\to\RP^2$ is a map of twisted degree $n$, where $n$ is an odd integer in the set $\{-k,-k+1 \ldots, k-1,k\}$. Consequently, the action of $\pi_1(K)$ on $[K;\RP^2]^{\ast}_{\beta}$ can be obtained from the action of $\pi_1(\RP^2)$ on $[\RP^2;\RP^2]^{\ast}_{id}$ and the induced homomorphism on twisted cohomology groups by $f_n$ is not trivial for $n\neq\pm k$. This then forces   $f_n$ to be strongly surjective.

Throughout the text, for the sake of simplicity, we call a finite and connected two-dimensional $CW$-complex by a two-complex. We also simplify $f$ is a continuous map by $f$ is a map. Further, we consider the cyclic group $\Z_2=\{1,-1\}$ with its multiplicative structure  and, where appropriated, we identify an automorphism $\tau\in{\rm Aut}(\Z)$ with its value $\tau(1)$.




\section{Actions of $\pi_1$ on based homotopy classes}\label{Section-Action}

\hspace{4mm} Let $K$ be a two-complex with fundamental group $\Pi=\pi_1(K)$ and take a 0-cell $e^0$ in $K$ to be its base-point. Consider the real projective plane $\RP^2$ with its minimal cellular structure, namely $\RP^2=c^0\cup c^1\cup c^2$, and take $c^0$ to be the base-point.

In what follows, we distinguish free homotopies and based homotopies starting at a given based map $f:K\rightarrow\RP^2$. We observe that, by the Cellular Approximation Theorem, each map from $K$ into $\RP^2$ is freely homotopic to a based map. Hence, in order to study free or based homotopy classes, we can assume that a homotopy class always admite a representative   given {\it a priori} by a map which is based. We define:
    \begin{itemize}
    \item $[K;\RP^2]$ is the set of free homotopy classes $[f]$ of maps $f:K\rightarrow\RP^2$.
    \item $[K;\RP^2]^{\ast}$ is the set of based homotopy classes $[f]^{\ast}$ of based maps $f:K\rightarrow\RP^2$.
    \item $[K;\RP^2]^{\ast}_{\alpha}$ is the set of based homotopy classes $[f]^{\ast}$ of based maps $f:K\rightarrow\RP^2$ such that $\alpha=f_{\#}:\pi_1(K)\rightarrow\pi_1(\RP^2)$.
    \end{itemize}

It follows that $$[K;\RP^2]^{\ast}=\bigsqcup_{\alpha\in\hom(\Pi;\Z_2)}[K;\RP^2]^{\ast}_{\alpha}.$$

Of course, we are identifying, in this description, $\hom(\Pi;\Z_2)$ with $\hom(\pi_1(K);\pi_1(\RP^2))$.

The fundamental group $\pi_1(\RP^2)$ acts on the set $[K;\RP^2]^{\ast}$ and, following \cite[Chapter V, Corollary 4.4]{Whitehead}, $[K;\RP^2]$ corresponds to the quotient set of $[K;\RP^2]^{\ast}$ by this action, what we indicate by $$[K;\RP^2]\equiv\frac{[K;\RP^2]^{\ast}}{\pi_1(\RP^2)}.$$

We recall, in a general context, how the action of $\pi_1(Y)$ on $[X;Y]^{\ast}$ is defined. Consider based spaces $(X,x_0)$ and $(Y,y_0)$. Let $f_0,f_1:X\rightarrow Y$ be based maps and let $u:I\rightarrow Y$ be a loop in $Y$ based at $y_0$. Suppose there exists a homotopy $F:X\times I\rightarrow Y$, starting at $f_0$ and ending at $f_1$, such that $F(x_0,t)=u(t)$. Then we say that $f_0$ is freely homotopic to $f_1$ along to $u$ and we write $f_0\simeq_{u}f_1$. If $u$ is the constant path at the base-point $y_0$, we say that $f_0$ is based homotopic to $f_1$ and we write $f_0\simeq_{\ast}f_1$. We have:
    \begin{itemize}
    \item[{\rm (1)}] Given a based map $f_0:X\rightarrow Y$ and a loop $u$ in $Y$ based at $y_0$, then $f_0\simeq_{u} f_1$ for some based map $f_1:X\rightarrow Y$.
    \item[{\rm (2)}] If $f_0\simeq_{u}f_1$ and $f_0\simeq_{v}f_2$ and $u\simeq v\,({\rm rel.}\partial I)$, then $f_1\simeq_{\ast}f_2$.
    \item[{\rm (3)}] If $f_0\simeq_{u}f_1$ and $f_1\simeq_{v}f_2$, then $f_0\simeq_{uv}f_2$.
    \end{itemize}

This defines the action of $\pi_1(Y)$ on $[X;Y]^{\ast}$. Thus: given a based map $f_0:X\rightarrow Y$ and an element $[u]\in\pi_1(Y)$ represented by a loop $u$ in $Y$ based at $y_0$, there exists a based map $f_1:X\rightarrow Y$ such that $f_0\simeq_{u}f_1$, and we define the action of $[u]$ on $[f_0]^{\ast}$ to be $[f_1]^{\ast}$, that is, $$[u][f_0]^{\ast}=[f_1]^{\ast}.$$

Returning to our approach, let $\sigma:I\rightarrow\RP^2$ be the loop in $\RP^2$ based at $c^0$ whose trajectory encircles once the 1-cell $c^1$. Then $[\sigma]\in\pi_1(\RP^2)$ is the generator of $\pi_1(\RP^2)$. Furthermore, $\sigma$ induces the identity automorphism $$\hat{\sigma}:\pi_1(\RP^2)\rightarrow\pi_1(\RP^2) \quad{\rm given \ by}\quad \hat{\sigma}([u])=[\sigma^{-1}][u][\sigma]=[u].$$

\begin{lemma}\label{Lemma-Invariance-Action}
Each subset $[K;\RP^2]^{\ast}_{\alpha}$ of $[K;\RP^2]^{\ast}$ is invariant by the action of $\pi_1(\RP^2)$.
\end{lemma}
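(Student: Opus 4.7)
The plan is to show that if $[f_0]^\ast\in [K;\RP^2]^\ast_\alpha$ and $[u]\in\pi_1(\RP^2)$, then the class $[u][f_0]^\ast=[f_1]^\ast$, obtained from a homotopy $F\colon K\times I\to\RP^2$ with $F_0=f_0$, $F_1=f_1$ and $F(e^0,t)=u(t)$, still satisfies $(f_1)_{\#}=\alpha$. In other words, I want to compare the induced homomorphism $(f_1)_{\#}$ to $(f_0)_{\#}=\alpha$ using the homotopy $F$.

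First, I would recall the standard ``trace'' formula: given a homotopy $F$ from $f_0$ to $f_1$ whose restriction to the basepoint traces out a loop $u$, the induced homomorphisms are related by
\[
(f_1)_{\#}([\gamma]) \;=\; [u]^{-1}\,(f_0)_{\#}([\gamma])\,[u]
\]
for every $[\gamma]\in\pi_1(K,e^0)$. This is verified directly: given a loop $\gamma$ in $K$ based at $e^0$, the composite $F\circ(\gamma\times\mathrm{id}_I)\colon I\times I\to\RP^2$ is a free homotopy from $f_0\circ\gamma$ to $f_1\circ\gamma$ whose value on the boundary point $e^0$ traces $u$, and a routine rearrangement of the square gives the conjugation relation above.

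Next I would invoke the particular feature of the target space: $\pi_1(\RP^2)\cong\Z_2$ is abelian, so conjugation in it is trivial. This is the same observation the paper already recorded for $\sigma$, namely $\hat{\sigma}=\mathrm{id}$. Consequently,
\[
(f_1)_{\#}([\gamma]) \;=\; [u]^{-1}\,(f_0)_{\#}([\gamma])\,[u] \;=\; (f_0)_{\#}([\gamma]) \;=\; \alpha([\gamma]),
\]
so $(f_1)_{\#}=\alpha$ and $[f_1]^\ast\in[K;\RP^2]^\ast_\alpha$. This shows each $[K;\RP^2]^\ast_\alpha$ is invariant under the $\pi_1(\RP^2)$-action.

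There is essentially no obstacle here; the only thing to be careful about is the direction of conjugation in the trace formula, but since $\pi_1(\RP^2)$ is abelian the convention is irrelevant. The proof is really a two-line observation: the induced homomorphism changes only by an inner automorphism of $\pi_1(\RP^2)$, and $\pi_1(\RP^2)$ has no nontrivial inner automorphisms.
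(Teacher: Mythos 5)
Your proof is correct and is essentially the paper's own argument: the paper likewise uses the relation $(f_1)_{\#}=\hat{\sigma}\circ(f_0)_{\#}$ and the fact that $\hat{\sigma}=\mathrm{id}$ (conjugation is trivial in the abelian group $\pi_1(\RP^2)$), checking it only on the generator $[\sigma]$ rather than on an arbitrary $[u]$, which suffices. No gaps.
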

\begin{proof}
Consider the generator $[\sigma]$ of $\pi_1(\RP^2)$. Given a based homotopy class $[f_0]^{\ast}\in[K;\RP^2]^{\ast}_{\alpha}$, we take a based map $f_1:K\rightarrow\RP^2$ such that $f_0\simeq_{\sigma}\!f_1$, that is, there exists a homotopy $H:f_0\simeq f_1$ such that $H(e^0\!,t)=\sigma(t)$. Then $(f_{1})_{\#}=\hat{\sigma}\circ (f_{0})_{\#}=id\circ\alpha=\alpha$ and, by definition, $[\sigma][f_0]^{\ast}=[f_1]^{\ast}$.
\end{proof}

It follows that $$[K;\RP^2]\equiv\bigsqcup_{\alpha\in\hom(\Pi;\Z_2)}\frac{ \ [K;\RP^2]^{\ast}_{\alpha} \ }{\pi_1(\RP^2)}.$$

In the case in which $K$ is {\it aspherical} (has contractible universal covering), Theorem 4.12 of \cite{Livro-Verde-Chapter-II} provides, for each $\alpha\in\hom(\Pi;\Z_2)$, a bijection $$[K;\RP^2]_{\alpha}^{\ast}\equiv H^2(K;_{\alpha}\!\Z),$$ in which $H^2(K;_{\alpha}\!\Z)$ is the second cohomology group of $K$ with the local integer coefficient system $\alpha:\Pi\to\Z_2\approx{\rm Aut}(\Z)$. We explore this fact next.


\section{Self-maps of the projective plane}\label{Section-Self-Maps}

\hspace{4mm} In this section we present an analysis of the free and based homotopy classes of self-maps of the real projective plane. In a sense, what we present explains certain facts that can be inferred from \cite[Proposition 2.1]{Daciberg-Spreafico}.

Throughout the section, $\p:S^2\to\RP^2$ is the double covering map, $\A:S^2\to S^2$ is the antipodal map, $\Z^{odd}$ is the set of the odd integers and $\Z^{odd}_{+}$ is the set of the non-negative ones.

We consider the sphere $S^2$ as the suspension of $S^1$, that is, the quotient space obtained from the cylinder $S^1\times[-1,1]$ by collapsing $S^1\times\{-1\}$ to a single point (the south pole) and $S^1\times\{1\}$ to another single point (the north pole). Thus, we can  write a point of $S^2$ as a class $\lceil e^{i\theta},\tau\rceil$ in which $(e^{i\theta},\tau)\in S^1\times[-1,1]$. We take:
    \begin{align*}
    s^0_1 & =\lceil 1,0\rceil \ \textrm{to be the base-point in} \ S^2; \\ s^0_2 & =-s^0_1=\lceil -1,0\rceil \ \textrm{to be the antipodal point of} \ s^0_1; \\ s^1_1 & = \ \textrm{the half-equator arc}  \ \lceil e^{i\theta},0\rceil, \ \textrm{for} \ 0\leq\theta\leq\pi, \ \textrm{from} \ s^0_1 \ \textrm{to} \ s^0_2. \\ \sigma & =\p(s^1_1) \ \textrm{to be the loop representing the generator of} \ \pi_1(\RP^2).
    \end{align*}

The orientation of a loop provides over $\RP^2$ the local integer coefficient system $$\varrho:\pi_1(\RP^2)\rightarrow{\rm Aut}(\Z) \ \ {\rm given \ by} \ \ \varrho(1)=1 \ \, {\rm and} \ \, \varrho(-1)=-1.$$

Next, we consider the cohomology group $H^2(\RP^2;_{\varrho}\!\Z)$ with the local integer coefficient system $\varrho$.

We stablish the following one-to-one correspondences: $$[\RP^2;\RP^2]^{\ast}\equiv\Z_2\sqcup\Z^{odd} \qquad{\rm and}\qquad [\RP^2;\RP^2]\equiv\Z_2\sqcup\Z^{odd}_{+}.$$ 

All maps given {\it a priori} will be considered to be based. 

Firstly, we write $$[\RP^2;\RP^2]^{\ast}=[\RP^2;\RP^2]^{\ast}_0\sqcup[\RP^2;\RP^2]^{\ast}_{id},$$ in which the subscripts $0$ and $id$ indicate that the corresponding maps induce the trivial and the identity homomorphism on fundamental groups, respectively.

It follows by Lemma \ref{Lemma-Invariance-Action} that $$[\RP^2;\RP^2]\equiv\frac{[\RP^2;\RP^2]^{\ast}_0}{\pi_1(\RP^2)}\sqcup\frac{[\RP^2;\RP^2]^{\ast}_{id}}{\pi_1(\RP^2)}.$$

Let $h:\RP^2\to\RP^2$ be a based map and take $\widetilde{h}:S^2\to S^2$ to be the based lifting of $h\circ\p$  through $\p:S^2\to\RP^2$, so that we have the commutative diagram:
    \begin{center}
    \begin{tabular}{c}\xymatrix{ S^2 \ar[r]^-{\widetilde{h}} \ar[d]_-{\p} & S^2 \ar[d]^-{\p} \\ \RP^2 \ar[r]^{h} & \RP^2 \\ }
    \end{tabular}
    \end{center}

We claim that $\widetilde{h}$ is necessarily either even or odd; in fact, for each $x\in S^2$, we have either $\widetilde{h}(-x)=\widetilde{h}(x)$ or $\widetilde{h}(-x)=-\widetilde{h}(x)$, and so $\langle \widetilde{h}(x),\widetilde{h}(-x)\rangle=\pm1$. By continuity, the map $x\mapsto\langle\widetilde{h}(x),\widetilde{h}(-x)\rangle$ is constant equal to either $1$ or $-1$, and so $\widetilde{h}$ is either even or odd.

If $h:\RP^2\to\RP^2$ induces the trivial homomorphism on fundamental groups, that is, $[h]^{\ast}\in[\RP^2;\RP^2]^{\ast}_0$, then $h$ lifts through $\p$ to a based map $\overline{h}:\RP^2\rightarrow S^2$. Now, $$[\RP^2;S^2]^{\ast}\equiv H^2(\RP^2;\Z)\approx\Z_2,$$ and we can describe the two classes $[\overline{h}_{00}]^{\ast}$ and $[\overline{h}_{01}]^{\ast}$ by means of its representing maps, namely, $\overline{h}_{00}:\RP^2\to S^2$ is the constant map and $\overline{h}_{01}:\RP^2\to S^2$ is the quotient map that collapses the one-skeleton $S^1\subset\RP^2$ to the base-point of $S^2$. Defining the composed maps $h_{0i}=\p\circ\overline{h}_{0i}:\RP^2\to\RP^2$ for $i=0,1$, we have $$[\RP^2;\RP^2]^{\ast}_0=\{[h_{00}]^{\ast},[h_{01}]^{\ast}\}\equiv\Z_2.$$ 

Since $h_{00}$ and $h_{01}$ lift through $\p$ and obviously $[\RP^2;S^2]^{\ast}\equiv[\RP^2;S^2]$, it follows that$$[\RP^2;\RP^2]_0\equiv[\RP^2;\RP^2]^{\ast}_0\equiv\Z_2.$$ 

We remark that both the liftings $\widetilde{h}_{00}=\overline{h}_{00}\circ\p\,(=constant)$ and $\widetilde{h}_{01}=\overline{h}_{01}\circ\p$ are even self-maps of $S^2$. Now, if $\widetilde{h}:S^2\to S^2$ is even, then $\widetilde{h}=\widetilde{h}\circ\A$, and so $\deg(\widetilde{h})=-\deg(\widetilde{h})$, which forces $\deg(\widetilde{h})=0$ and, therefore, $\widetilde{h}$ is homotopically trivial, which does not imply that $h$ is itself homotopically trivial. This is what happens with the map $\widetilde{h}_1$, that is, $\widetilde{h}_1$ is homotopic to the constant map, but the maps $\overline{h}_1$ and $h_1$ are not.

On the other hand, it follows from Borsuk-Ulam Theorem (in its version presented in \cite[Chapter 2, \S\,6, p.\,91]{Guillemin-Pollack}) that if $\widetilde{h}:S^2\to S^2$ is odd, then $\deg(\widetilde{h})$ is odd. By the way, maps $\widetilde{h}:S^2\to S^2$ of arbitrary odd degree they do exist: for, given an odd integer $k$, the suspension $\widetilde{h}_k:S^2\to S^2$ of the map $S^1\ni z\mapsto z^k\in S^1$ is odd and has degree $k$.

Each such an odd map $\widetilde{h}_k:S^2\to S^2$ induces on the  quotient a based map $h_k:\RP^2\to\RP^2$, and it is easy to see that $h_k$ induces the identity homomorphism of fundamental groups, because the map $\p\circ\widetilde{h}_k$ maps the 1-cell $s^1_1$ in $S^2$ onto $k$ times the 1-cell $c^1$ in $\RP^2$. Thus, for each odd $k$, we have $[h_k]^{\ast}\in[\RP^2;\RP^2]_{id}^{\ast}$.

Since the degree classifies the homotopy classes of self-maps of $S^2$, it follows from the Lifting Homotopy Property that for two odd integers $k\neq l$, the corresponding maps $h_k,h_l:\RP^2\to\RP^2$ are not based homotopic. 

Therefore, the function $[h]^{\ast}\mapsto\deg(\widetilde{h})$ provides a one-to-one correspondence $$[\RP^2;\RP^2]_{id}^{\ast}\equiv\Z^{odd}.$$

Equivalently, this bijection can be written as $[h]^{\ast}\mapsto d_{\varrho}(h)$, in which the last number is the {\it twisted degree} of $h$, that is, the integer $d_{\varrho}(h)$ such that the homomorphism $$h^{\ast}:H^2(\RP^2;_{\varrho}\!\Z)\to H^2(\RP^2;_{\varrho}\!\Z),$$ induced by $h$ on cohomology groups with the non-trivial local integer coefficient system $\varrho$, corresponds to the multiplication by $d_{\varrho}(h)$. This will be clearer after Section \ref{Section-Key-Map}.

Now, for each odd $k>0$, both the maps $\widetilde{h}_{-k}$ and $\A\circ\widetilde{h}_k$ have the same degree $-k$, and so they are freely homotopic, but not based homotopic, since $\A\circ\widetilde{h}_k$ is not even based.

We present a special free homotopy $\widetilde{H}:\tilde{h}_{-k}\simeq\A\circ\widetilde{h}_k$. We define $\widetilde{H}:S^2\times I\to S^2$ by $$\widetilde{H}\big(\lceil e^{i\theta},\tau\rceil,t\big)=\big\lceil\mathbf{r}(t\pi)\!\cdot\!e^{i(2t-1)k\theta},(1-2t)\tau\big\rceil,$$ in which $z\mapsto\textbf{r}(t\pi)\!\cdot\!z$ is the positive rotation under angle $t\pi$ in the complex plane. We have:
    \begin{align*}
    \widetilde{H}\big(\lceil e^{i\theta},\tau\rceil,0\big) & =\lceil e^{-ik\theta},\tau\rceil=\widetilde{h}_{-k}\big(\lceil e^{i\theta},\tau\rceil\big), \\ \widetilde{H}\big(\lceil e^{i\theta},\tau\rceil,1\big) & =\lceil -e^{ik\theta},-\tau\rceil=-\widetilde{h}_{k}\big(\lceil e^{i\theta},\tau\rceil\big)=\A\circ \widetilde{h}_k(\lceil e^{i\theta},\tau\rceil\big).
    \end{align*}

Hence, $\widetilde{H}$ is really a free homotopy starting at $\widetilde{h}_{-k}$ and ending at $\A\circ\widetilde{h}_{k}$. Such  a homotopy is not based, since the trajectory of the path $t\mapsto\widetilde{H}(\lceil 1,0\rceil,t)$ is the half-equator arc $s^1_1$. 

Now, we observe that, since $k$ is odd, $\widetilde{H}$ is odd in the first coordinate, that is, $$\widetilde{H}\big(\!-\lceil e^{i\theta},\tau\rceil,t\big)=-\widetilde{H}\big(\lceil e^{i\theta},\tau\rceil,t\big).$$

Thus, $\widetilde{H}$ induces to quotient a free homotopy $H:\RP^2\times I\to\RP^2$ starting at $h_{-k}$ and ending at $h_k$ (since $h_k=\p(\A\circ\widetilde{h}_k)$). Moreover, the trajectory of the path $t\mapsto H(c^0,t)$ is the loop $\sigma$ whose path homotopy class is the generator of $\pi_1(\RP^2)$. 

We have proved that: the action of $\pi_1(\RP^2)$ on $[\RP^2;\RP^2]^{\ast}_{id}$ exchanges the based homotopy classes $[h_k]^{\ast}$ and $[h_{-k}]^{\ast}$ and so the function $[h]\mapsto|d_{\varrho}(h)|$ provides a bijection $$[\RP^2;\RP^2]_{id}\equiv\Z^{odd}_{+}.$$


\section{The model two-complex of $\Pe=\langle x,y\,|\,x^{k+1}yxy \rangle$}\label{Section-Complex-K}

\hspace{4mm} Let $K$ be the model two-complex of the group presentation $\Pe=\langle x,y\,|\,x^{k+1}yxy\rangle$, with $k\geq1$ odd, that is, the two-complex with a single $0$-cell $e^0$, two 1-cells $e^1_x\cup e^1_y$ and a single two-cell $e^2$ which is attached on the one-skeleton $K^1=e^0\cup e^1_x\cup e^1_y$ by spelling the word $r=x^{k+1}yxy$. We take the $0$-cell $e^0$ to be the base-point of $K$.

The fundamental group of $K$ is the group $\Pi=F(x,y)/N(r)$ presented by $\Pe$. Let  $\bar{x}$ and $\bar{y}$ in $\Pi$ be the images of $x$ and $y$, respectively, by the natural homomorphism $F(x,y)\to\Pi$ from the free group $F(x,y)$ onto $\Pi$.

In what follows, we consider the cohomology groups of $K$ with local integer coefficient systems, which we call {\it twisted cohomology groups}, for  short.

Since the group $\Pi$ has two generators, $\bar{x}$ and $\bar{y}$, and in the word $r=x^{k+1}yxy$ the sums of the powers of the letters $x$ and $y$ are respectively $k+2$ (which is odd) and $2$, we have just one local integer coefficient systems over $K$, other than the trivial one, namely, the system $$\beta:\Pi\rightarrow{\rm Aut}(\Z) \quad \textrm{given by} \quad \beta(\bar{x})=1 \quad {\rm and}\quad \beta(\bar{y})=-1.$$

\begin{proposition}
Let $K$ be the model two-complex of the presentation $\Pe=\langle x,y\,|\,x^{k+1}yxy\rangle$, with $k\geq1$ odd. We have:
    \begin{itemize}
    \item[{\rm (1)}] $H^2(K;\Z)=0$ and $H^2(K,_{\beta}\!\Z)\approx\Z/k\Z$.
    \item[{\rm (2)}] $K$ is aspherical.
    \end{itemize}
\end{proposition}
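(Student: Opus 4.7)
The plan is to handle the two parts by distinct classical machinery: (1) by reading off the cellular cochain complex of the universal cover $\tilde K$ via Fox calculus, and (2) by appealing to the Lyndon identity theorem, which characterizes asphericity of one-relator two-complexes through the relator being non-power.

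For (1), I would start from the standard free $\Z\Pi$-resolution
$$0 \to \Z\Pi \xrightarrow{\partial_2} \Z\Pi \oplus \Z\Pi \xrightarrow{\partial_1} \Z\Pi \to \Z \to 0$$
associated to $\Pe$, with $\partial_2$ encoded by the Fox derivatives of $r = x^{k+1}yxy$. Applying $\hom_{\Z\Pi}(-, {}_{\alpha}\!\Z)$ for a coefficient system $\alpha$ collapses the computation of $H^2(K;{}_{\alpha}\!\Z)$ to the cokernel of multiplication by the integer pair $\bigl(\alpha(\partial r/\partial x),\alpha(\partial r/\partial y)\bigr)\in\Z\oplus\Z$. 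A short Fox-calculus computation (which I would carry out explicitly but omit here) gives $\partial r/\partial x = 1+\bar x+\cdots+\bar x^k+\bar x^{k+1}\bar y$ and $\partial r/\partial y = \bar x^{k+1}+\bar x^{k+1}\bar y\bar x$. Evaluating at the trivial system yields the pair $(k+2,2)$, whose entries are coprime because $k$ is odd, so the map is surjective and $H^2(K;\Z)=0$. Evaluating at $\beta$ yields $(k,0)$: the $\bar x^{k+1}\bar y$ term contributes $-1$, cancelling an extra $+1$ in the first Fox derivative, while the two summands in the second Fox derivative cancel against each other. The image is $k\Z$, producing $H^2(K;{}_{\beta}\!\Z)\cong\Z/k\Z$.

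For (2), I would invoke the Lyndon identity theorem: the model two-complex of a one-relator presentation is aspherical precisely when the relator is not a proper power in the ambient free group. So I reduce to showing that $r = x^{k+1}yxy$ is not of the form $w^n$ for any $w\in F(x,y)$ with $n\geq 2$. Using the exponent-sum homomorphisms on $F(x,y)$, one has $\exp_y(r)=2$ and $\exp_x(r)=k+2$, so a factorization $r=w^n$ forces $n\mid 2$, hence $n=2$, with $\exp_x(w)=(k+2)/2$. Since $k$ is odd, $(k+2)/2\notin\Z$, a contradiction, so $r$ is not a proper power and $K$ is aspherical.

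The only mild obstacle is organizing the Fox-calculus computation so that the $\beta$-twisted entries cancel in exactly the way required to give $(k,0)$ rather than a spurious nonzero contribution in the second coordinate; once this is confirmed, part (1) is essentially mechanical. Part (2) is a clean black-box application of Lyndon's theorem once the parity argument for the relator is in place, so no substantial difficulty is expected there.
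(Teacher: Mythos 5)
Your proof is correct and follows essentially the same route as the paper: part (1) is the same Fox-calculus computation on the twisted cellular cochain complex, with the identical derivatives $\partial r/\partial x=(1+x+\cdots+x^k)+x^{k+1}y$ and $\partial r/\partial y=x^{k+1}(1+yx)$ and the same evaluations $(k+2,2)$ (coprime, so $H^2(K;\Z)=0$) and $(k,0)$ (so $H^2(K;{}_{\beta}\!\Z)\approx\Z/k\Z$). For part (2) the paper formally cites a proposition deducing asphericity from the cohomology computation, but its accompanying remark states exactly the criterion you invoke (the relator is not freely trivial and not a proper power), so your direct parity argument via Lyndon's theorem is an equivalent and equally valid justification.
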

\begin{proof}
The first statement of (1) follows from a straightforward analysis of the cellular co-chain complex of $K$ and the second one is announciated in \cite[Example 7.3]{Fenille-One-Relator-RP2}, but without details. Since next we need to  identify explicitly a generator of $H^2(K;_{\beta}\!\Z)$, we provide a detailed calculation of the group $H^2(K;_{\beta}\!\Z)$ 
after this proof. Assertion (2) follows from (1) and  \cite[Proposition 4.1]{Fenille-One-Relator-RP2}.
\end{proof}

\begin{remark}{\rm
Before proceeding to  the calculations of the twisted cohomology group $H^2(K,_{\beta}\!\Z)$, we observe that, for $k\ne 0$ even, the two-complex $K$ is also aspherical. This fact follows from \cite[Section 4]{Fenille-One-Relator-RP2}, in which it is remarked that a one-relator model two-complex is aspherical if and only if the single relator of its presentation is not freely trivial and has period one. Thus, in order to have $K$ non-aspherical we should take $k=0$. On the other hand, if $k\geq1$ is even, then $H^2(K;\Z)\approx\Z_2$ and so $K$ would not be an interesting two-complex from the viewpoint of the inspiring problem of this article.}
\end{remark}

Returning to the case $k\geq1$ odd, we compute the twisted cohomology group of $H^2(K,_{\beta}\!\Z)$. We use the procedure and the notations presented in \cite[Section 3]{Fenille-One-Relator-RP2}. Briefly:
\begin{itemize}
\item $\xi_{\beta}:\Z[\Pi]\to\Z$ is the $\beta$-augmentation function $\xi_{\beta}(\sum_kn_i\pi_i)=\sum_in_i\beta(\pi_i)$.
\item $\|\cdot\|:\Z[F(x,y)]\rightarrow\Z[\Pi]$ is the natural extension on group rings of the natural homomorphism $F(x,y)\rightarrow\Pi=F(x,y)/N(r)$.
\item $\dfrac{\partial}{\partial x},\dfrac{\partial}{\partial y}:F(x,y)\rightarrow\Z[F(x,y)]$ are the Reidmeister-Fox derivatives.
\end{itemize}

For the relator word $r=x^{k+1}yxy$, we have:
    \begin{align*}
    \frac{\partial r}{\partial x} & = (1+x+\cdots+x^k)+x^{k+1}y \quad\textrm{and so}\quad  \xi_{\beta}\big(\|\frac{\partial r}{\partial x}\|\big)=(k+1)-1=k,  \\ \noalign{\smallskip} \frac{\partial r}{\partial y} & = x^{k+1}(1+yx) \quad\textrm{and so}\quad \xi_{\beta}\big(\|\frac{\partial r}{\partial y}\|\big)=1(1-1)=0.
    \end{align*}

Consider the cellular chains of $K$ with its natural identifications and generators:
    \begin{align*}
    C_0(K) & =H_0(K^0)\approx\Z\langle e^0\rangle, \\ \noalign{\smallskip} C_1(K) & =H_1(K^1\!,K_{\Pe}^0)\approx\Z^2\langle e^1_x,e^1_y\rangle, \\ \noalign{\smallskip} C_2(K) & =H_2(K,K^1)\approx\Z\langle e^2\rangle.
    \end{align*}

Let $\widetilde{K}$ be the universal covering space of $K$, endowed with its natural cellular structure. Select a $0$-cell $\tilde{e}^0$ over $e^0$\!, a $1$-cell $\tilde{e}^1_x$ over $e^1_x$, a $1$-cell $\tilde{e}^1_y$ over $e^1_y$ and a $2$-cell $\tilde{e}^2$ over $e^2$. The group $\Pi$ acts on the left (via covering transformation) on the cellular chain complex $C_q(\widetilde{K})=H_q(\widetilde{K}^q\!,\widetilde{K}^{q-1})$ making it into a left $\Z[\Pi]$-module, so that we have identifications
    \begin{align*}
    C_0(\widetilde{K}) & =\Z[\Pi]\langle\tilde{e}^0 \rangle, \\ \noalign{\smallskip} C_1(\widetilde{K}) & =\Z[\Pi]^2\langle\tilde{e}^1_x,\tilde{e}^1_y \rangle, \\ \noalign{\smallskip} C_2(\widetilde{K}) & =\Z[\Pi]\langle\tilde{e}^2\rangle.
    \end{align*}

Via this identifications and considering the action $\beta:\Pi\rightarrow{\rm Aut}(\Z)$, we have the corresponding twisted cellular chain complex of left $\Z[\Pi]$-modules $$C_{\ast}^{\beta}(\widetilde{K}):0\rightarrow C_2(\widetilde{K})\stackrel{\tilde{\partial}_2^{\beta}}{\longrightarrow}C_1(\widetilde{K})\stackrel{\tilde{\partial}_1^{\beta}}{\longrightarrow}C_0(\widetilde{K})\rightarrow0,$$ in which the boundaries operators are given by
    \begin{align*}
    \tilde{\partial}_1^{\beta}(\tilde{e}^1_x) & = \xi_{\beta}(1-\bar{x})\tilde{e}^0=0, \\ \noalign{\smallskip} \tilde{\partial}_1^{\beta}(\tilde{e}^1_y) & = \xi_{\beta}(1-\bar{y})\tilde{e}^0=2\tilde{e}^0, \\ \noalign{\smallskip} \tilde{\partial}_2^{\beta}(\tilde{e}^2) & = \xi_{\beta}\big(\|\frac{\partial r}{\partial x}\|\big)\tilde{e}^1_x+\xi_{\beta}\big(\|\frac{\partial r}{\partial y}\|\big)\tilde{e}^1_y = k\tilde{e}^1_x.
    \end{align*}

Consider the corresponding twisted cellular co-chain complex $$C^{\ast}_{\beta}(\widetilde{K}):0\leftarrow\hom^{\Pi}(C_2(\widetilde{K});\Z)\stackrel{\tilde{\delta}_2^{\beta}}{\longleftarrow}\hom^{\Pi}(C_1(\widetilde{K});\Z)\stackrel{\tilde{\delta}_1^{\beta}}{\longleftarrow}\hom^{\Pi}(C_0(\widetilde{K});\Z)\leftarrow0.$$

In each $\hom^{\Pi}(C_i(\widetilde{K});\Z)$, the integers $\Z$ is seen as a left $\Z[\Pi]$-module via the action $\beta:\Pi\rightarrow{\rm Aut}(\Z)$. The co-boundaries operators $\tilde{\delta}_{\ast}^{\beta}$ are defined by the usual dual form $$\tilde{\delta}_{\ast}^{\beta}(\phi)=\phi\circ\tilde{\partial}_{2}^{\beta}.$$

Explicitly, a given co-chain $\phi\in\hom^{\Pi}(C_1(\widetilde{K});\Z)$ is defined by its values $\phi(\tilde{e}^1_x)$ and $\phi(\tilde{e}^1_x)$, and the co-chain $\tilde{\delta}_2^{\beta}(\phi):C_2(K)\to\Z$ is given by $$\tilde{\delta}_2^{\beta}(\phi)(\tilde{e}^2)=\xi_{\beta}\Big(\|\frac{\partial r}{\partial x}\|\Big)\phi(\tilde{e}^1_x)+\xi_{\beta}\Big(\|\frac{\partial r}{\partial y}\|\Big)\phi(\tilde{e}^1_y)= k\phi(\tilde{e}^1_x).$$

Now, $\hom^{\Pi}(C_2(\widetilde{K});\Z)\approx\Z$ is generated by the co-chain $\phi_2^{\ast}:C_2(\widetilde{K})\to\Z$ given by $$\phi_2^{\ast}(\tilde{e}^2)=1, \ \textrm{that is, the dual of the chain }\tilde{e}^2.$$

Analogously, $\hom^{\Pi}(C_1(\widetilde{K});\Z)\approx\Z^2$ is generated by the co-chains $\phi_x^{\ast},\phi_y^{\ast}:C_1(\widetilde{K})\to\Z$ which are the dual of the chains $\tilde{e}^1_x$ and $\tilde{e}^1_y$, that is, $$\phi_x^{\ast}(\tilde{e}^1_x)=1, \ \phi_x^{\ast}(\tilde{e}^1_y)=0,\quad{\rm and}\quad \phi_y^{\ast}(\tilde{e}^1_x)=0, \ \phi_y^{\ast}(\tilde{e}^1_y)=1.$$

Thus, the co-boundary operator $\tilde{\delta}_2^{\beta}:\hom^{\Pi}(C_1(\widetilde{K});\Z)\to\hom^{\Pi}(C_2(\widetilde{K});\Z)$ is given by
$$\tilde{\delta}_2^{\beta}(\phi_x^{\ast}) = k\phi_2^{\ast} \quad{\rm and}\quad \tilde{\delta}_2^{\beta}(\phi_y^{\ast})=0.$$

Therefore,
$$ H^2(K;_{\beta}\!\Z)\approx\dfrac{\hom^{\Pi}(C_2(\widetilde{K});\Z)}{{\rm Im}(\tilde{\delta}_2^{\beta})}\approx \dfrac{\Z\langle \phi_2^{\ast}\rangle}{\langle k\phi_2^{\ast} \rangle}\approx\frac{\Z}{k\Z}\big\langle \phi_2^{\ast}+\langle k\phi_2^{\ast}\rangle\big\rangle.$$

\begin{remark}{\rm  Let us point out that the group $H^2(K;_{\beta}\!\Z)$ depends on the word $r$, and not only on the sums of the powers of the letters $x$ and $y$. For example, if we take $r=x^{k+2+n}y^2x^{-n}$, for $k\geq1$ and $n\geq 0$, it can be shown that $H^2(K;_{\beta}\!\Z)\approx \Z_{k+2}$.}
\end{remark}


\section{Maps from $K$ into $\RP^2$}\label{Section-Key-Map}

\hspace{4mm} In this section, we continue to consider the model two-complex $K$ of the group presentation $\Pe=\langle x,y\,|\,x^{k+1}yxy \rangle$, with $k\geq1$ odd. Also, we keep considering the non-trivial local integer coefficient system $\varrho:\pi_1(\RP^2)\to{\rm Aut}(\Z)$.

Let us consider the cellular map $\omega:K\to\RP^2$ defined naturally by collapsing the $1$-cell $e^1_x$ to the $0$-cell $c^0$ of $\RP^2$. It is possible to understand the map $\omega$ by considering $K$ as the identification space obtained from the disc $D^2$ with identifications in its boundary $S^1=\partial D^2$ respect to the word $r=x^{k+1}yxy$. We explain: first we divide $S^1$ into $k+4$ oriented arc segments, all of the them with the counter-clockwise orientation, enumerated from a selected point $e^0$ by $x,\ldots,x,y,x,y$. Then we collapse the first $k+1$ arcs indexed with the letter $x$ to the point $e^0$ and we collapse the other arc indexed with the letter $x$ to another point, we say, $-e^0$. That way, we obtain a new disc whose boundary are composed by two oriented arcs, both with the counter-clockwise orientation, indexed by the letter $y$. Then, by the collage of these arcs one with other, we obtain the projective plane.

We have described the map $\omega$ in such a way that it is easy to see that the $1$-cell $e^1_y$ is identified with the $1$-cell $c^1$ and the interior of the $2$-cell $e^2$ is mapped homeomorphically onto the interior of the $2$-cell $c^2$.

Consider the induced homomorphism $\omega_{\#}:\Pi\rightarrow\pi_1(\RP^2)$ on fundamental groups. Of course, $\omega_{\#}(\bar{x})=1$ and $\omega_{\#}(\bar{y})=-1$. Hence, the map $\omega$ co-induces on $K$ the local integer coefficient system $\varrho\circ\omega_{\#}:\Pi\rightarrow{\rm Aut}(\Z)$ given by $\varrho\circ\omega_{\#}(\bar{x})=1$ and $\varrho\circ\omega_{\#}(\bar{y})=-1$, that is, $\varrho\circ\omega_{\#}=\beta$. It follows that $\omega$ induces a homomorphism $$\omega^{\ast}:H^2(\RP^2;_{\varrho}\!\Z)\to H^2(K;_{\beta}\!\Z).$$

\begin{proposition}\label{Proposition-omega-homo}
For each $k\geq3$ odd, the induced homomorphism $\omega^{\ast}:H^2(\RP^2;_{\varrho}\!\Z)\to H^2(K;_{\beta}\!\Z)$ corresponds to the natural epimorphism $\Z\to\Z/k\Z$.
\end{proposition}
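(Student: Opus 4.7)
The plan is to compute $H^2(\RP^2;_{\varrho}\!\Z)$ via the same Fox-calculus machinery used in Section~\ref{Section-Complex-K}, and then to track the cellular chain-level lift of $\omega$ on top cells. The proposition will follow by directly identifying the induced map on generators. Starting with the left-hand group: the minimal cellular structure on $\RP^2$ corresponds to the presentation $\langle a\,|\,a^2\rangle$, whose Fox derivative is $\partial(a^2)/\partial a = 1+a$. The $\varrho$-augmentation then gives $\xi_{\varrho}(\|1+a\|)=1+(-1)=0$, so the twisted boundary $\tilde{\partial}_2^{\varrho}(\tilde{c}^2)=0$, the twisted coboundary $\tilde{\delta}_2^{\varrho}$ is identically zero, and $H^2(\RP^2;_{\varrho}\!\Z)\approx\Z$, generated by the equivariant dual $\psi_2^{\ast}$ of $\tilde{c}^2$.

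Next I would fix a lift $\tilde{\omega}:\widetilde{K}\to S^2$ with $\tilde{\omega}(\tilde{e}^0)=\tilde{c}^0$ and compute its action on $2$-chains. The key geometric input, already highlighted in the text, is that $\omega$ maps the interior of $e^2$ homeomorphically onto the interior of $c^2$. Cellularly this is visible from the attaching map of $e^2$, which spells $x^{k+1}yxy$: after collapsing the $x$-loops through $\omega$, the boundary is traversed as $(c^1)^2$, which is precisely the attaching word of $c^2$ in $\RP^2$. Consequently the lift can be arranged so that $\tilde{\omega}_{\ast}(\tilde{e}^2)=\epsilon\,\tilde{c}^2$ for a single lift $\tilde{c}^2$, where $\epsilon=\pm 1$ encodes the orientation convention. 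The induced cochain map then satisfies $\omega^{\ast}(\psi_2^{\ast})(\tilde{e}^2)=\psi_2^{\ast}(\epsilon\,\tilde{c}^2)=\epsilon=\epsilon\,\phi_2^{\ast}(\tilde{e}^2)$, and by $\Pi$-equivariance with respect to $\beta=\varrho\circ\omega_{\#}$ this identifies $\omega^{\ast}(\psi_2^{\ast})=\epsilon\,\phi_2^{\ast}$ on the level of cochains.

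Passing to cohomology and using that $[\phi_2^{\ast}]$ generates $H^2(K;_{\beta}\!\Z)\approx\Z/k\Z$, we conclude that $\omega^{\ast}$ sends a chosen generator of $\Z$ to $\epsilon\pmod{k}$, which is the natural epimorphism up to the choice of generator. The only non-routine point I expect is the chain-level claim $\tilde{\omega}_{\ast}(\tilde{e}^2)=\pm\tilde{c}^2$: one must verify that the $2$-chain image is a single lift and not a $\Z[\pi_1(\RP^2)]$-combination of several hemispheres. This is a direct consequence of the cellular description of $\omega$ and the compatibility between the attaching words of $e^2$ and $c^2$ described above, but it is the geometric step that does the actual work; once it is in hand, the remainder is routine bookkeeping within the Fox-calculus framework of Section~\ref{Section-Complex-K}.
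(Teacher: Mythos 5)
Your proposal is correct and follows essentially the same route as the paper: compute $H^2(\RP^2;_{\varrho}\!\Z)\approx\Z$ via the Fox derivative of $a^2$, lift $\omega$ to the universal coverings, observe that the chosen lift carries the preferred $2$-cell $\tilde{e}^2$ to a single lift of $c^2$ (the paper fixes the sign $\epsilon=+1$ by its orientation conventions rather than leaving it as $\pm1$), and dualize to see that the generator maps to the generator of $\Z/k\Z$. The geometric step you flag as the one doing the real work is exactly the step the paper justifies by its explicit cellular description of $\tilde{\omega}$, so no gap remains.
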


The twisted cohomology group $H^2(\RP^2;_{\varrho}\!\Z)$ is  well known to be infinite cyclic. However, we present the computation in order to identify an explicit generator.  After that  we proceed to the proof of Proposition \ref{Proposition-omega-homo} itself.

Let us consider the projective plane $\RP^2$ as the model two complex of the group presentation $\Qe=\langle z\,|\,z^2\rangle$, so that, $\RP^2$ is endowed with its natural cellular structure $\RP^2=c^0\cup c^1\cup c^2$.

Let $\p:S^2\to\RP^2$ be the universal covering map and consider the sphere $S^2$ with its cellular structure co-induced by $\p$, so that, $S^2=s^0_1\cup s^0_2\cup s^1_1\cup s^1_2\cup s^2_1\cup s^2_2$, with $\p(s^i_j)=c^i$, for $1\leq i,j\leq 2$. As before, we choose the numeration of the cells so that the $1$-cell $s^1_1$ starts at $s^0_1$ and ends at $s^0_2=-s^0_1$, and $s^2_1$ is the $2$-cell whose orientation makes $\tilde{\partial}_2(s^2_1)=s^1_1+s^1_2$.

So, we take $s^0_1$, $s^1_1$ and $s^2_1$ to be the favourite cells of $S^2$, so that we have the following identifications of $\Z[\pi_1(\RP^2)]=\Z[\Z_2]$-module:
    \begin{align*}
    C_0(\widetilde{\RP^2})=C_0(S^2) & =\Z[\Z_2]\langle s^0_1 \rangle, \\ \noalign{\smallskip} C_1(\widetilde{\RP^2})=C_1(S^2) & =\Z[\Z_2]\langle s^1_1 \rangle, \\ \noalign{\smallskip} C_2(\widetilde{\RP^2})=C_2(S^2) & =\Z[\Z_2]\langle s^2_1\rangle.
    \end{align*}

Via this identifications and considering the action $\varrho:\Z_2\rightarrow{\rm Aut}(\Z)$, we have the corresponding twisted cellular chain complex of left $\Z[\Z_2]$-modules $$C_{\ast}^{\varrho}(S^2):0\rightarrow C_2(S^2)\stackrel{\tilde{\partial}_2^{\varrho}}{\longrightarrow}C_1(S^2)\stackrel{\tilde{\partial}_1^{\varrho}}{\longrightarrow}C_0(S^2)\rightarrow0,$$ in which the boundaries operators are given by
    \begin{align*}
    \tilde{\partial}_1^{\varrho}(s^1_1) & = \xi_{\varrho}(1-\bar{z})s^0_1=2s^0_1, \\ \noalign{\smallskip} \tilde{\partial}_2^{\varrho}(s^2_1) & =\xi_{\varrho}\big(\|\frac{\partial z^2}{\partial z}\|\big)s^1_1 = \xi_{\varrho}\big(\|1+z\|\big) s^1_1 = 0.
    \end{align*}

Consider the corresponding twisted cellular co-chain complex $$C^{\ast}_{\varrho}(S^2):0\leftarrow\hom^{\Z_2}(C_2(S^2);\Z)\stackrel{\tilde{\delta}_2^{\varrho}}{\longleftarrow}\hom^{\Z_2}(C_1(S^2);\Z)\stackrel{\tilde{\delta}_1^{\varrho}}{\longleftarrow}\hom^{\Z_2}(C_0(S^2);\Z)\leftarrow0.$$

In each $\hom^{\Z_2}(C_i(S^2);\Z)$, the integers $\Z$ is seen as a left $\Z[\Z_2]$-module via the action $\varrho:\Z_2\rightarrow{\rm Aut}(\Z)$. The co-boundaries operators $\tilde{\delta}_{\ast}^{\varrho}$ are defined by the dual form $\tilde{\delta}_{\ast}^{\varrho}(\phi)=\phi\circ\tilde{\partial}_{2}^{\varrho}$.

Particularly, since $\tilde{\partial}_2^{\varrho}=0$, also $\tilde{\delta}_{\ast}^{\varrho}=0$. Therefore, $$H^2(\RP^2;_{\varrho}\!\Z)=\hom^{\Z_2}(C_2(S^2);\Z)\approx\Z\langle \varphi_2^{\ast} \rangle,$$ in which the generator is the co-chain $\varphi_2^{\ast}:C_2(S^2)\to\Z$ defined by $\varphi_2^{\ast}(s^2_1)=1$, that is, $\varphi_2^{\ast}$ is the dual of the chain $s^2_1$.

\begin{proof}[Proof of Proposition \ref{Proposition-omega-homo}] Let $\q:\widetilde{K}\to K$ be the universal covering map and let $\tilde{\omega}:\tilde{K}\to S^2$ be the unique cellular map satisfying $\tilde{\omega}(\tilde{e}^0)=s^0_1$ and $\p\circ\tilde{\omega}=\omega\circ\q$, that is, $\tilde{\omega}$ is the lifting of $\omega$ to universal coverings, {\it starting at} $e^0_1$. Then $\tilde{\omega}$ collapses $\tilde{e}^1_x$ to $s^0_1$ and maps the interior of the cells $\tilde{e}^1_y$ and $\tilde{e}^2$ homeomorphically onto the interior of $s^1_1$ and $s^2_1$, respectively.

Let consider the following commutative diagram, in which the vertical arrows are the homomorphisms induced by $\tilde{\omega}$ in level of chains:
    \begin{center}
    \begin{tabular}{c}\xymatrix{0 \ar[r] & C_2(\tilde{K}) \ar[d]^-{\tilde{\omega}^2_{\#}} \ar[r]^-{\tilde{\partial}_2^{\beta}} & C_1(\tilde{K}) \ar[d]^-{\tilde{\omega}^1_{\#}} \ar[r]^-{\tilde{\partial}_1^{\beta}} & C_0(\tilde{K}) \ar[d]^-{\tilde{\omega}^0_{\#}} \ar[r] & 0 \\ 0 \ar[r] & C_2(S^2) \ar[r]^-{\tilde{\partial}_2^{\varrho}=0} & C_1(S^2) \ar[r]^-{^{\tilde{\partial}_1^{\varrho}}} & C_0(S^2) \ar[r] & 0}
    \end{tabular}
    \end{center}

By the definition of the map $\tilde{\omega}$, the homomorphism $\tilde{\omega}^0_{\#}$, $\tilde{\omega}^1_{\#}$ and $\tilde{\omega}^2_{\#}$ are given, in terms of its values on the generators of its domains, by: $$\tilde{\omega}^0_{\#}(\tilde{e}^0)=s^0_1 \quad{\rm and}\quad \tilde{\omega}^1_{\#}(\tilde{e}^1_x)=0, \ \tilde{\omega}^1_{\#}(\tilde{e}^1_y)=s^1_1 \quad{\rm and}\quad \tilde{\omega}^2_{\#}(\tilde{e}^2)=s^2_1.$$

Now, we consider the corresponding commutative diagram in level of co-chains. To shorten, we denote $C^i(X)={\rm Hom}^G(C_i(X);\Z)$, for $(X,G)=(\tilde{K},\Pi)$ and $(X,G)=(S^2,\Z_2)$.
    \begin{center}
    \begin{tabular}{c}\xymatrix{0 & C^2(\tilde{K}) \ar[l] & C^1(\tilde{K}) \ar[l]_-{\tilde{\delta}_2^{\beta}} & C^0(\tilde{K}) \ar[l]_-{\tilde{\delta}_1^{\beta}} & 0 \ar[l] \\ 0 & C^2(S^2) \ar[l] \ar[u]_-{\tilde{\omega}_2^{\#}} & C^1(S^2) \ar[l]_-{\tilde{\delta}_2^{\varrho}=0} \ar[u]_-{\tilde{\omega}_1^{\#}} & C^0(S^2) \ar[l]_-{\tilde{\delta}_1^{\varrho}} \ar[u]_-{\tilde{\omega}_0^{\#}} & 0 \ar[l]}
    \end{tabular}
    \end{center}

By duality, the homomorphisms $\tilde{\omega}_0^{\#}$, $\tilde{\omega}_1^{\#}$ and $\tilde{\omega}_2^{\#}$ are given, in terms of its values in the generators of its domains, by: $$\tilde{\omega}^0_{\#}(\varphi_0^{\ast})=\phi_0^{\ast} \quad{\rm and}\quad \tilde{\omega}_1^{\#}(\varphi_1^{\ast})=\phi_y^{\ast} \quad{\rm and}\quad \tilde{\omega}_2^{\#}(\varphi_2^{\ast})=\phi_2^{\ast}.$$

It follows that the homomorphism $\omega^{\ast}:H^2(\RP^2;_{\varrho}\!\Z)\to H^2(K;_{\beta}\!\Z)$ is given by: $$\frac{C^2(S^2)}{{\rm Im}(\tilde{\delta}_2^{\varrho})} \ni \varphi_2^{\ast}+0 \mapsto \tilde{\omega}_2^{\#}(\varphi_2^{\ast})+{\rm Im(\tilde{\delta}_2^{\beta})}=\varphi_2^{\ast}+\langle k\varphi_2^{\ast}\rangle\in \frac{C^2(\tilde{K})}{{\rm Im}(\tilde{\delta}_2^{\beta})}.$$

Therefore, $\omega^{\ast}$ corresponds to the natural epimorphism $\Z\to\Z/k\Z$.
\end{proof}

Obviously, we present more calculations than necessary to prove Proposition \ref{Proposition-omega-homo}. We do this in order to make more clear the induced homomorphisms by the maps involved.


\section{Proof of the Main Theorem}\label{Section-Proof}

\hspace{4mm} Via the natural identification ${\rm Aut}(\Z)\approx\Z_2$, we have $\hom(\Pi;\Z_2)=\{1,\beta\}$, in which $1$ denotes the trivial homomorphism and $\beta$ is the homomorphism given at the beginning of Section \ref{Section-Complex-K}. Thus, we have $$[K;\RP^2]^{\ast}=[K;\RP^2]_{1}^{\ast}\sqcup[K;\RP^2]_{\beta}^{\ast}.$$

Since $K$ is aspherical, we have the following bijections given in \cite[Theorem 4.2]{Livro-Verde-Chapter-II}: $$[K;\RP^2]_{1}^{\ast}\equiv H^2(K;\Z)=0 \qquad {\rm and} \qquad [K;\RP^2]_{\beta}^{\ast}\equiv H^2(K;_{\beta}\!\Z)\approx\Z/k\Z,$$

Hence, the sets $[K;\RP^2]_{1}\equiv[K;\RP^2]_{1}^{\ast}$ have only one element, namely, the (free or based) homotopy class of the constant map at the $0$-cell $c^0$. In the assertions (1) and (2) of Theorem \ref{Main-Theorem}, this element corresponds to $1$ and the component $[K;\RP^2]^{\ast}_{1}$ of $[K;\RP^2]^{\ast}$, as well as the component $[K;\RP^2]_{1}$ of $[K;\RP^2]$, corresponds to $\{1\}$. This proves a part of the assertions (1) and (2) of Theorem \ref{Main-Theorem}.

On the other hand, the set $[K;\RP^2]_{\beta}^{\ast}$ has $k$ elements, and we describe them and also the elements of the set $[K;\RP^2]_{\beta}\equiv[K;\RP^2]_{\beta}^{\ast}/\pi_1(\RP^2)$.

If $k=1$, then $[K;\RP^2]^{\ast}_{\beta}$ has only one element and so $[K;\RP^2]_{\beta}\equiv[K;\RP^2]^{\ast}_{\beta}$. In the statement of Theorem \ref{Main-Theorem}, assertion (1), this element corresponds to $\bar{0}$ and the component $[K;\RP^2]^{\ast}_{\beta}$ of $[K;\RP^2]^{\ast}$, as well as the component $[K;\RP^2]_{\beta}$ of $[K;\RP^2]$, corresponds to $\{\bar{0}\}$. Now, if $[f]^{\ast}\in[K;\RP^2]^{\ast}_{\beta}$, then $f_{\#}=\beta$ and, since $H^2(K;_{\beta}\!\Z)=0$, it follows from \cite[Theorem 1.1]{Fenille-One-Relator-RP2} that $f$ is homotopic to a non-surjective map.

We have completed the proof of the assertion (1) of Theorem \ref{Main-Theorem}.

In order to prove what is missing from assertion (2), we take $k=2p-1\geq3$. 

Let ${\rm Odd}(k)$ be the set of the odd integers in the set $\{2-k,\ldots,k-2\}$. Then ${\rm Odd}(k)$ has $k-1=2p-2$ elements, being $p-1$ of them positive and $p-1$ of them negative. 

For each $n\in{\rm Odd}(k)$ we define the map $$f_n=h_n\circ\omega:K\to\RP^2,$$ in which $\omega:K\to\RP^2$ is the map built in Section \ref{Section-Key-Map} and $h_n:\RP^2\to\RP^2$ is the based map whose twisted degree is $d_{\varrho}(h_n)=n$, as presented in Section \ref{Section-Self-Maps}. Additionally, define $$f_0=h_k\circ\omega:K\to\RP^2.$$

For each $n\in{\rm Odd}(k)\cup\{0\}$, the homomorphism $(f_n)_{\#}:\Pi\to\pi_1(\RP^2)$ is equal to $\beta$ and the homomorphism $f_n^{\ast}:H^2(\RP^2;_{\varrho}\!\Z)\to H^2(K;_{\beta}\!\Z)$ corresponds to $\mu_n:\Z\to\Z/k\Z$ given by $\mu_n(1)=n+k\Z$. It follows that, for each $n\in{\rm Odd}(k)$, the map $f_n$ is strongly surjective and, for $m\neq n$ in ${\rm Odd}(k)\cup\{0\}$, the maps $f_m$ and $f_n$ are not based homotopic. 
Therefore, we have $$[K;\RP^2]_{\beta}^{\ast}=\big\{[f_n]^{\ast} : n\in{\rm Odd}(k)\cup\{0\}\big\}\equiv\Z_k.$$

However, since the maps $h_n$ and $h_{-n}$ are freely homotopic, also the maps $f_n$ and $f_{-n}$ are freely homotopic, for each $n\in{\rm Odd}(k)$, that is, the based homotopy classes $[f_n]^{\ast}$ and $[f_{-n}]^{\ast}$ are exchanged by the action of $\pi_1(\RP^2)$ over $[K;\RP^2]^{\ast}_{\beta}$. On the other hand, since $k$ is odd, it is obvious that the remaining class $[f_0]^{\ast}$ is fixed by the action of $\pi_1(\RP^2)$.

Therefore, taking ${\rm Odd}^{+}(k)$ to be the set of the positive integers in ${\rm Odd}(k)$, we have $$\frac{[K;\RP^2]_{\beta}^{\ast}}{\pi_1(\RP^2)}\equiv\big\{[f_n] : n\in{\rm Odd}^+(k)\cup\{0\}\}\equiv\Z_p.$$ 

Since we have proved that each $f_n$, for $n\in{\rm Odd}^{+}(k)$, is strongly surjective, in order to finish the proof of Theorem \ref{Main-Theorem} it remains to prove that $f_0$ is not strongly surjective.

For this, consider the cellular map $g^1:K^1=S^1_x\vee S^1_y\to S^1$ which maps $S^1_x$ encircling $2$ times into $S^1$, in the positive orientation, and maps $S^1_y$ encircling $k+2$ times in $S^1$, in the opposite orientation. The induced homomorphism $g^1_{\#}:F(x,y)\approx\pi_1(K^1)\to\pi_1(S^1)\approx\Z$ is given by $g^1_{\#}(x)=2$ and $g^1_{\#}(y)=-(k+2)$. It follows that $g^1_{\#}(r)=0$, and so $g^1$ extends to a cellular map $\overline{g}:K\to S^1$. Composing $\overline{g}$ with the skeleton inclusion $l:S^1\hookrightarrow\RP^2$ we obtain a map $g:K\to\RP^2$ such that the induced homomorphism $g_{\#}:\Pi\to\pi_1(\RP^2)$ is equal to $\beta$ and the induced homomorphism $g^{\ast}:H^2(\RP^2;_{\varrho}\!\Z)\to H^2(K;_{\beta}\!\Z)$ is trivial. It follows from the one-to-one correspondence $[K;\RP^2]^{\ast}_{\beta}\equiv H^2(K;_{\beta}\!\Z)\approx\Z/k\Z$ that the based homotopy classes $[g]^{\ast}$ and $[f_0]^{\ast}$ are equal, since both correspond to the zero class in $H^2(K;_{\beta}\!\Z)$. Therefore, $f_0$ is homotopic to the non-surjective map $g$. \hspace{7.5cm} $\square$


\section*{Acknowledgments}

\hspace{4mm} Part of this work was develop during the visit of the second author to the Faculdade de Matem\'atica -- Universidade Federal de Uberl\^andia, during the period Oct31/Nov03, 2019. The second author would like thank the Faculdade de Matem\'atica for the great hospitality. This work is part of the Projeto Tem\'atico FAPESP: {\it Topologia Alg\'ebrica, Geom\'etrica e Diferencial} -- 2016/24707-4 (Brazil). 



\vspace{5mm}

\noindent{{\sc Marcio Colombo Fenille} \\ Faculdade de Matem\'atica -- Universidade Federal de Uberl\^andia \\ Av.\,Jo\~ao Naves de \'Avila 2121, Sta M\^onica, CEP:~38408-100, Uberl\^andia MG, Brazil} \\ E-mail address: {\sl mcfenille@gmail.com}

\vspace{3mm}

\noindent{{\sc Daciberg Lima Gon\c calves} \\ Department of Mathematics -- IME -- University of S\~ao Paulo \\ Rua~do~Mat\~ao 1010, CEP:~05508--090, S\~ao Paulo SP, Brasil} \\ E-mail address: {\sl dlgoncal@ime.usp.br}

\end{document}